\theoremstyle{plain}                                       %
\newtheorem{thm}{\quad Theorem}                            %
\newtheorem{cor}[thm]{\quad Corollary}                     %
\newtheorem{prop}[thm]{\quad Proposition}                  %
\theoremstyle{definition}                                  %
\newtheorem{rmk}[thm]{\quad Remark}                        %
\newtheorem{ejem}[thm]{\quad Example}                      %
\newcommand{\R}{{\Bbb R}}
\newcommand{\N}{{\Bbb N}}
\newcommand{\K}{{\Bbb K}}
\newcommand{\C}{{\Bbb C}}
\newcommand{\Z}{{\Bbb Z}}
\newcommand{\al}{{\alpha}}
\newcommand{\be}{{\beta}}
\newcommand{\om}{{\omega}}
\newcommand{\g}{{\gamma}}
\newcommand{\de}{{\delta}}
\newcommand{\Ri}{{\mathcal{R}}}
\begin{document}

\vspace{1cm}

\title{The group generated by Riordan involutions.}

\author{Ana Luz\'{o}n*, Manuel A. Mor\'{o}n$^\natural$ and L. Felipe Prieto-Martinez$\dag$ }
\address{*Departamento de Matemática Aplicada. Universidad Polit\'{e}cnica de Madrid (Spain).}
\email{anamaria.luzon@upm.es}

\address{ $\natural$ Departamento de Algebra, Geometría y Topología.
Universidad Complutense de Madrid  and Instituto de Matemática
Interdisciplinar (IMI)(Spain).} \email{mamoron@mat.ucm.es}

\address{ $\dag$ IES Alpajés. Aranjuez.(Spain).}
\email{felipe.prieto@educa.madrid.org}

\maketitle

\begin{abstract}
We prove that any element in the group generated by the Riordan
involutions is the product of at most four of them. We also give a
description of this subgroup as a semidirect product of a special
subgroup of the commutator subgroup and the Klein four-group.
\end{abstract}

Keywords:  Riordan groups, product of involutions, commutator
subgroup, inverse limit.

 MSC:  20H20, 15B99.


\section{Introduction}

To set our results in a wider context, we have to say that the group
generated by the involutions in a group $G$ has been studied for
different kinds of groups. In many of these groups has been obtained
that any element in the group generated by the involutions in $G$ is
the product of at most four of them. See as a sample
\cite{Kasner,HalmosKakutani1958,Wonen,Djokovic,GHR,Knuppel91,Knuppel98,ofarrell1,Slowik1,ofarrel2,Malcolm}
and the references therein. The aim of this paper is to prove the
same result for the Riordan groups $\Ri(\K)$ and $\Ri_n(\K)$,
$n\geq1$. That is,

\begin{thm}\label{T:main}
Let $n\geq1$ and $G=\Ri(\K)$ or $\Ri_n(\K)$. Then, any element in
the group generated by the involutions in $G$ is the product of at
most four of them.
\end{thm}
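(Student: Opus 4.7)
The plan is to leverage the structural description announced in the abstract: if $H \leq G$ is the subgroup generated by the involutions of $G = \Ri(\K)$ or $\Ri_n(\K)$, then $H = N \rtimes V$ with $V \cong \Z/2\Z \times \Z/2\Z$ and $N \subseteq [H,H]$. Granted this, every $h \in H$ lies in exactly one coset $N v_0$ for $v_0 \in V$, and the theorem reduces to the four statements ``every element of $N v_0$ is a product of at most four involutions of $G$''. The decomposition also clarifies why four is the right number: commutators of two involutions $[\iota,\iota'] = \iota\iota'\iota\iota'$ are intrinsically products of four involutions, and such commutators generate $N$.

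First I would fix two representatives $\sigma_1, \sigma_2 \in G$ whose cosets generate $V$. From the Riordan pair formalism $(d,h)$, natural candidates are $(-1, z)$ and some $(d_0, h_0)$ with $h_0 \circ h_0 = z$, $h_0 \neq z$. For a target $h = n v_0 \in H$, writing $h$ as a product of involutions translates, via the Riordan composition law, into a system of functional equations: find pairs $(d_i, h_i)$ with $h_i \circ h_i = z$ and $d_i(z)\, d_i(h_i(z)) = 1$ whose Riordan product equals $(d,h)$. The $h_i$-coordinates satisfy a classical involutive-series problem (parametrised by conjugating $z \mapsto -z$ by an arbitrary invertible series), and the $d_i$-coordinates solve a matching cocycle-type equation over each $h_i$; the Riordan formalism handles both explicitly. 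The deepest case is $v_0 = e$ (so $h = n \in N$), where one must exhibit four non-trivial involutions whose product is $n$; the cosets $N\sigma_1$ and $N\sigma_2$ require only three free involutions plus one of the fixed $\sigma_j$, and the coset $N\sigma_1\sigma_2$ requires only two, so all four cases fit under the uniform bound.

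The main obstacle will be proving that these functional equations always admit an involutive solution inside $G$. For the finite case $\Ri_n(\K)$ the equations form a finite triangular polynomial system, and I would verify solvability by induction on $n$, exploiting that truncating a Riordan involution yields an involution in $\Ri_n(\K)$. For the infinite case $\Ri(\K)$, rather than passing through the inverse limit $\Ri(\K) = \varprojlim \Ri_n(\K)$ --- which would require coherent choices of involutions across all truncation levels and is not automatic from the finite statement --- I would prove the $\Ri(\K)$ case directly at the level of formal power series and then recover the $\Ri_n(\K)$ case by projection. The delicate point within this construction is to ensure that the two ``free'' involutions $(d_3,h_3)$ and $(d_4,h_4)$ can always be chosen so that the prescribed target $n$ is attained; this reduces to showing that the map $(\tau_3,\tau_4) \mapsto \sigma_1\sigma_2\tau_3\tau_4$ is surjective onto $N$, which I expect to establish by exhibiting explicit recursions on the coefficients of $h_3, h_4, d_3, d_4$ given $n$.
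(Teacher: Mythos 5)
Your global frame is the same as the paper's: split the group generated by involutions into the four cosets of a Klein four\--group over the normal subgroup $N$ of matrices with unit diagonal, and build the needed involutions coefficient by coefficient from the Riordan composition law. The concrete reduction you propose for the identity coset, however, cannot work. You want to fix two involutions $\sigma_1,\sigma_2$ representing the generators of $V$ and prove that $(\tau_3,\tau_4)\mapsto\sigma_1\sigma_2\tau_3\tau_4$ is surjective onto $N$, i.e.\ that $(\sigma_1\sigma_2)^{-1}n$ is a product of two involutions for every $n\in N$. Now the main diagonal of a Riordan involution is one of $(1,1,\dots)$, $(-1,-1,\dots)$, $(1,-1,1,\dots)$, $(-1,1,-1,\dots)$, and the only involutions with constant diagonal are $\pm I$, which are central. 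Since $\sigma_1,\sigma_2$ must represent distinct nontrivial classes of $V$, the diagonal of $(\sigma_1\sigma_2)^{-1}n$ is either alternating or constantly $-1$. If it is alternating, a product of two involutions with alternating diagonal forces one factor to be $\pm I$, so $\tau_3\tau_4=\pm\tau$ is itself an involution and $n=\sigma_1(\pm\sigma_2)\tau$ would be a product of three involutions. If it is constantly $-1$, unwinding the diagonals shows every element of $N$ would be $(-I)\tau_3\tau_4$, i.e.\ a product of two involutions. Either way, every element of $N$ would be a product of at most three involutions, and this is false: the paper's final remark exhibits an element of $N$ (already in $\Ri_2$) that is not a product of three or fewer. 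So two free involutions are not enough; the correct reduction (Theorem \ref{T:3invo}) fixes only \emph{one} involution, the diagonal $\mathcal{I}^{+}_{0}$, and leaves \emph{three} free: $n=\mathcal{I}^{+}_{\al}\mathcal{I}^{+}_{\be}\mathcal{I}^{+}_{\g}\mathcal{I}^{+}_{0}$.

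Even after repairing the reduction, essentially all of the work lies in the step you defer to ``explicit recursions I expect to exhibit,'' and that step is not routine. First, the target must lie in the subgroup $\Omega_0$ (equivalently $h_2^2=h_1h_3$); this necessary condition on any product of involutions is exactly what makes the order\--$3$ equation compatible with the order\--$2$ one, and without invoking it the system is inconsistent at the first nontrivial level. Second, by the Riordan Involution Formula (Theorem \ref{T:invo}) the free parameters of an involution occur only in alternate positions (column $0$ at odd rows, column $1$ at even rows), so at every other level the new equation brings no new free parameter and must be solved jointly with the previous level's equation; the resulting $2\times3$ linear systems are consistent only because a certain $2\times2$ minor equals $\frac{n}{2}(b_{2,1}-a_{2,1})$ and one can arrange $a_{2,1}\neq b_{2,1}$ at the outset. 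This consistency analysis is the actual content of the proof and is absent from your outline. (Your preference for arguing directly with formal power series rather than through the inverse limit is reasonable, but not necessary: the paper's induction produces a coherent tower of solutions, so passing to $\varprojlim\Ri_n$ is harmless, and the finite cases then follow by projection exactly as you say.)
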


The Riordan group $\Ri(\K)$  is a multiplicative group whose
elements are some special infinite lower triangular matrices with
entries in $\K$, a field of characteristic zero. Some of its
subgroups and many of its elements appear in many different context
along the time. For instance in Rota's and collaborators'
description of Umbral Calculus or equivalently the study of Sheffer
sequences of polynomials \cite{Rota2}, see also
\cite{Barnabei,KnuthConvo, He-H-S, WangWang, poly, selfShef}. Even
more, every generalized Appell polynomial sequence \cite{Boas-Buck}
can be obtained by means of a Riordan matrix. In particular, the
matrices associated to many classical polynomial sequences are
Riordan matrices. For example: Chebyshev polynomials, Fibonacci
polynomials, Pell polynomials, Morgan-Voyce polynomials, Fermat
polynomials. See \cite{poly}. Of course, the sequence of binomial
polynomials gives rise to the oldest and most studied Riordan
matrix: Pascal's triangle.

The group of invertible infinite lower triangular Toeplitz matrices
is a subgroup of the Riordan group. Also, the family of all
invertible Jabotinsky matrices. These matrices can be considered as
matrices associated to composition operators in suitably chosen
spaces. Riordan matrices correspond to invertible \textit{weighted
composition operators} in an appropriate space. See \cite{teo}. The
above observation describes the way a Riordan matrix transforms
power series. The formula representing this transformation is called
the Fundamental Theorem of Riordan Arrays by some authors in the
specific literature. The group of the formal power series of order
one with composition and the so-called substitution group of formal
power series are naturally isomorphic to some subgroups of the
Riordan group. See for example
\cite{Bennett,JabotinskyFrances,Jabotinsky53,Scheinberg,ofarrell1,Babenko}.
Verde-Star in \cite{VerdeStar85} describes a group of operators in
the multivariate context that now can be interpreted as an extension
of the Riordan group in several variables. See
\cite{CheonHuang,ofarrel3} for some recent related results.

The Riordan group was introduced, under this name and in a more
restrictive context, by L. Shapiro and collaborators in
\cite{Sha91}. Soon after R. Sprugnoli \cite{Spr94} obtained many
combinatorial identities using this group. Because of the origin of
Riordan arrays, they are intrinsically related to Combinatorics.
There, there are its first and most of its applications. Currently,
the Riordan group is being studied under different angles, including
the algebraic structure, the construction of new matrices from old,
polynomials associated, Riordan pattern quest, among some others. A
non-exhaustive list of works related to Riordan group is the
following \cite{BarryMeixner, Cheon-Jin,Cheon, He,
Can-kwant,Constr,Complem,Double,Merlini,Rog78,Sha4,WangWang,Yang18}
and references therein. The first and second authors ran into this
group from a fixed point problem to compute the quotient of series
$\displaystyle{\frac{f}{g}}$, see  \cite{teo,BanPas,2ways}. This
caused the $T(f\mid g)$ notation for a Riordan matrix.

In \cite{finitas} $\Ri(\K)$ is described as an inverse limit of a
certain inverse sequence of groups $\Ri_n(\K)$ formed by
$(n+1)\times (n+1)$ invertible lower triangular matrices. For $\K=
\R$ or $\C$, the groups $\Ri_n(\K)$ have a natural structure of Lie
groups. Using both facts above, in \cite{Lie} the authors got and
exploited a Frechet-Lie group structure in $\Ri(\K)$. Another
consequence of the inverse limit approach is the description of any
involutions in every Riordan group $\Ri_n(\K)$ and $\Ri(\K)$, see
\cite{formula}. This is the starting point for this paper.

In Section 2 we recall some basic facts needed about Riordan
matrices, specially those related to involutions in Riordan groups.
In Section 3 we observe that the subgroup of Riordan matrices with
ones in the main diagonal plays a significant role in our context.
In fact, we prove that this subgroup is the commutator subgroup and
that any of its elements is a commutator. We illustrate our result
with Pascal's triangle example. Section 4 contains the main results
in this paper. In particular, a detailed proof of Theorem
\ref{T:main}, a description of the group generated by involutions
involving the semidirect product of a special subgroup of the
commutator subgroup and a copy of the Klein four-group. Finally, we
compute the minimal number of involutions needed to describe any
element in the group generated by them in $\Ri_n$, $n\geq1$ and in
$\Ri$.

To end this introduction, we would like to note that Riordan groups,
far from being finite and/or simple, share with finite non-abelian
simple groups the commutator and involution width. See the recent
crucial papers \cite{Ore} and \cite{Malcolm}.


\section{Previous results.}
In this paper $\N$ represents the set $\{0,1,2,3,\cdots\}\subset\K$.
An element $D=(d_{i,j})_{i,j\in\N}$ in the Riordan group $\Ri(\K)$
denoted by $(d,h)$ or $T(f\mid g)$ is an infinite matrix whose
entries are $d_{i,j}=[x^i]d(x)h^j(x)$ for $(d,h)$ notation or
$\displaystyle{d_{i,j}=[x^i]\frac{x^jf(x)}{g^{j+1}(x)}}$ for
$T(f\mid g)$ notation, with $d,f,g\in\K[[x]]$ invertible for Cauchy
product and $h\in\K[[x]]$ invertible for composition operation.
Moreover, $[x^k]$ denotes the $k$-th coefficient in the series
expansion. Note that, by definition, these matrices are invertible
infinite lower triangular. In terms of the parameters the operations
in the group are:
\[
(d,h)(l,m)=(dl(h),m(h)),  \qquad
(d,h)^{-1}=\left(\frac{1}{d(h^{-1})},h^{-1}\right)
\]
where $h^{-1}\circ h=h\circ h^{-1}=x$. Or
\[
T(f\mid g)T(r\mid
s)=T\left(fr\left(\frac{x}{g}\right)\Big|gs\left(\frac{x}{g}\right)\right),
\qquad T^{-1}(f\mid g)=
T\left(\frac{1}{f\left(\frac{x}{A}\right)}\Big| A\right)
\]
where $\frac{x}{g}\circ\frac{x}{A}=\frac{x}{A}\circ\frac{x}{g}=x$
and $\al\beta\left(\frac{x}{\gamma}\right)$ means
$\al(x)\beta\left(\frac{x}{\gamma(x)}\right)$. The series $A$ is the
so-called $A$-sequence associated to the Riordan matrix $T(f\mid
g)$. The $A$-sequence allows us to construct horizontally, i. e. by
rows, in the following way
\[
d_{i,j}=\sum_{k=0}^{i-j}a_kd_{i-1,j-1+k} \qquad i,j\geq1
\]
where $A=\sum_{n\geq0}a_nx^n$. See \cite{Rog78}. Note that the
series $g=\sum_{n\geq0}g_nx^n$ allows us to construct vertically,
i.e. by columns, a Riordan matrix in the following way
\[
d_{i,j}=\sum_{k=0}^{i-j}g_kd_{i+1-k,j+1}  \qquad i,j\geq0.
\]
See \cite{teo,2ways}. In the special case that $D=T(f\mid g)$ is a
Riordan involution, then $A=g$. In fact, g is always the
$A$-sequence of the inverse of $D$. See Proposition 7 in
\cite{2ways}.

The action induced by $D=(d,h)$ in $\K[[x]]$ is given by
\[
(d,h)\al=d\al(h) \quad \text{for}\ \al\in\K[[x]].
\]
That is, $(d,h)$ is a weighted composition operator in $\K[[x]]$.
One can get consistently the corresponding formula for the $T(f\mid
g)$ notation.

For every $n\in\N$ consider the general linear group $GL(n+1,\K)$
formed by all $(n+1)\times(n+1)$ invertible matrices with
coefficients in $\K$. In the sequel, if it cause not confusion, we
denote only by $\Ri$ or $\Ri_n$ to refer to Riordan groups. Since
every Riordan matrix is lower triangular, we can define a natural
homomorphism $\Pi_n: \mathcal{R}\rightarrow GL(n+1,\K)$ given by $
\Pi_n((d_{i,j})_{i,j\in\N})=(d_{i,j})_{i,j=0,1,\cdots,n}. $ Consider
the subgroup $\mathcal{R}_n=\Pi_n(\mathcal{R})$ of $GL(n+1,\K)$. We
can recover the group $\mathcal{R}$ as the inverse limit of the
inverse sequence of groups
$\{(\mathcal{R}_n)_{n\in\N},(P_n)_{n\in\N}\}$ where
$P_n:\Ri_{n+1}\rightarrow\Ri_n$ is such that if $D\in\Ri_{n+1}$,
$P_n(D)$ is obtained from $D$ by deleting its last row and its last
column, i.e.
$P_{n}((d_{i,j})_{i,j=0,1,\cdots,n+1})=(d_{i,j})_{i,j=0,1,\cdots,n}$.
See \cite{finitas}. Obviously if $n=0$ then $\Ri_0=\K^*$ with the
usual product in $\K$ being $\K^{*}=\K\setminus\left\{0\right\}$.

Later in \cite{formula}, using the inverse limit approach described
above we got

\begin{thm}\label{T:invo}{(\bf Riordan Involution's Formula)}
Suppose $n\geq2$. Let $D=(d_{i,j})\in\Ri_{n-1}$ be an involution
and take $\hat{D}=(d_{i,j})\in\Ri_{n}$ such that
$P_{n-1}(\hat{D})=D$.

(a) If $n$ is even, $\hat{D}$ is an involution if and only if
$d_{n,1}$ is arbitrary and
\begin{equation}\label{E:d(n,0) n par}
d_{n,0}=-\frac{1}{2d_{0,0}}\sum_{k=1}^{n-1}d_{n,k}d_{k,0}
\end{equation}

(b) If $n$ is odd, $\hat{D}$ is an involution if and only if
$d_{n,0}$ is arbitrary and
\begin{equation}\label{E:d(n,1) n impar}
d_{n,1}=-\frac{1}{2d_{1,1}}\sum_{k=2}^{n-1}d_{n,k}d_{k,1}
\end{equation}

Moreover, if $a_0,\cdots,a_{n-2}$ are the parameters in Theorem 5 in
\cite{finitas} to construct $D$, then the needed $a_{n-1}$ to
construct $\hat{D}$ is given by the formula

\begin{equation}\label{e:a-seq}
a_{n-1}=\frac{1}{d_{n-1,n-1}}\left(d_{n,1}-\sum_{j=0}^{n-2}a_jd_{n-1,j}\right)
\end{equation}
\end{thm}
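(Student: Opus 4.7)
The plan is to reduce the involution condition on $\hat{D}$ to a single vector equation on its new last row, and then exploit the diagonal structure of Riordan involutions to read off the parity dichotomy.

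First, since $\hat{D}$ is lower triangular and $P_{n-1}(\hat{D})=D$, I write it in block form
\[
\hat{D}=\begin{pmatrix} D & 0 \\ r & \alpha \end{pmatrix}, \qquad r=(d_{n,0},d_{n,1},\ldots,d_{n,n-1}),\quad \alpha=d_{n,n}.
\]
Using $D^2=I_n$, a block computation gives
\[
\hat{D}^2=\begin{pmatrix} I_n & 0 \\ r(D+\alpha I_n) & \alpha^2 \end{pmatrix},
\]
so $\hat{D}^2=I_{n+1}$ is equivalent to the two conditions $\alpha^2=1$ and $r(D+\alpha I_n)=0$. The scalar equation $\alpha^2=1$ is automatic: the $A$-sequence recurrence gives $\alpha=a_0\, d_{n-1,n-1}$, and for any Riordan involution $(d,h)$ one has $a_0=h'(0)=\pm1$ (from $h\circ h=x$) and $d_{k,k}^2=1$ for every $k$ (a consequence of $d\cdot d(h)=1$ at $x=0$), so $\alpha^2=a_0^2\,d_{n-1,n-1}^2=1$.

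Next, I expand $r(D+\alpha I_n)=0$ componentwise and isolate the diagonal term, obtaining for each $0\le j\le n-1$
\[
(d_{j,j}+\alpha)\,d_{n,j}\;=\;-\sum_{k=j+1}^{n-1} d_{n,k}\, d_{k,j}.
\]
The identity $d_{k,k}=d_{0,0}\,a_0^k$ turns the leading coefficient into $d_{0,0}(a_0^j+a_0^n)$, which in the generic case $a_0=-1$ vanishes exactly when $j$ and $n$ have opposite parity; this is precisely the source of the parity dichotomy. For $n$ even, the $j=0$ equation has coefficient $2d_{0,0}$ and yields (\ref{E:d(n,0) n par}) upon solving for $d_{n,0}$, while the $j=1$ equation has a vanishing coefficient, hence puts no constraint on $d_{n,1}$ --- this is the ``arbitrariness'' claim. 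For $n$ odd the roles swap: the $j=1$ equation has coefficient $2d_{1,1}$ (using $d_{1,1}=-d_{0,0}$) and gives (\ref{E:d(n,1) n impar}), while $d_{n,0}$ is free. Finally, the auxiliary relation (\ref{e:a-seq}) is simply the $A$-sequence recurrence $d_{n,1}=\sum_{k=0}^{n-1} a_k d_{n-1,k}$ solved for the one new coefficient $a_{n-1}$.

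The main obstacle is to verify that the remaining equations --- those with $j\ge 2$, together with the ``free-parameter'' equation of the wrong parity --- are automatically consistent. Indeed, the entries $d_{n,j}$ with $j\ge 2$ are completely determined by $D$ through the $A$-sequence (the new parameter $a_{n-1}$ influences only $d_{n,1}$), so each such equation must already follow from $D^2=I_n$ combined with the Riordan identities, rather than impose a fresh constraint. I would attempt this consistency by a downward induction on $j$, exploiting the identity $A=g$ for Riordan involutions (Proposition 7 of \cite{2ways}) and pairing the horizontal ($A$-sequence) recurrence with the vertical recurrence governed by $g$ to collapse each equation back to a consequence of the involution relation for $D$.
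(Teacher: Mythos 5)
Your reduction is correct as far as it goes: the block computation showing that $\hat{D}^2=I$ is equivalent to $\alpha^2=1$ together with $r(D+\alpha I)=0$, the verification that $\alpha^2=1$ is automatic, and the extraction of formulas (\ref{E:d(n,0) n par}) and (\ref{E:d(n,1) n impar}) from the $j=0$ (resp.\ $j=1$) components via $d_{j,j}+\alpha=d_{0,0}(a_0^{j}+a_0^{n})$ are all fine, as is your reading of (\ref{e:a-seq}) as the $A$-sequence recurrence solved for $a_{n-1}$. (Two small points: $a_0=-1$ is not merely ``generic'' but holds for every nontrivial Riordan involution, and the statement tacitly excludes the trivial involutions $\pm I$, for which no entry of the new row is free.) Note also that the present paper does not prove this theorem --- it imports it from \cite{formula}, where it is obtained from the inverse-limit parametrization of $\Ri_n$ --- so the comparison is with that proof; your block-matrix route to the necessity direction is a legitimate and arguably cleaner alternative.

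The genuine gap is the sufficiency direction, which you correctly identify as ``the main obstacle'' but then only announce a plan for. What must be shown is that for every Riordan extension $\hat{D}$ of $D$, the remaining $n-1$ components of $r(D+\alpha I)=0$ hold automatically. For $2\le j\le n-1$ this can in fact be made formal: $\hat{D}^2$ is an element of $\Ri_n$ projecting to the identity of $\Ri_{n-1}$, and the $A$-sequence recurrence forces every such element to vanish in positions $(n,j)$ for $2\le j\le n-1$, with no appeal to $D^2=I$ beyond that. But the one remaining equation of the ``wrong'' parity --- $\sum_{k=2}^{n-1}d_{n,k}d_{k,1}=0$ when $n$ is even, and $d_{n,1}d_{1,0}+\sum_{k=2}^{n-1}d_{n,k}d_{k,0}=0$ (with $d_{n,1}$ given by (\ref{E:d(n,1) n impar})) when $n$ is odd --- sits in exactly one of the two coordinates that remain free in $\ker P_{n-1}$, so it does not vanish for structural reasons and is a genuine identity that must be derived from $D^2=I$ together with the Riordan relations (e.g.\ $A=g$ and the vertical recurrence). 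Your proposal asserts that these equations ``must already follow'' because the entries involved are determined by $D$; determinedness only shows that each such equation is either identically true or identically false on the whole fiber over $D$, and were it false the conclusion would be that $D$ admits no involutive extension at all. Until this single identity is actually established, the ``if'' halves of (a) and (b) --- that the displayed formula together with an arbitrary choice of the free entry really produces an involution --- remain unproved.
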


A consequence of Theorem \ref{T:invo} is the following.

\begin{cor} \label{C:alfabeta} Let $\al=\sum_{i\in\N}\al_i x^i$  be
an arbitrary formal power series then
\begin{itemize}
\item[(i)] There is an unique nontrivial involution
$D=(d_{i,j})_{i,j\in\N}$ such that
\[
d_{0,0}=1, \qquad d_{2i+1,0}=\al_{2i} \qquad \text{and} \qquad
d_{2i+2,1}=\al_{2i+1} \qquad \text{for} \qquad i=0,1,\cdots
\]
we denote it by  $\mathcal{I}^+_{\al}$.
\item[(ii)] There is an unique nontrivial involution
$D=(d_{i,j})_{i,j\in\N}$ such that
\[
d_{0,0}=-1, \qquad d_{2i+1,0}=\al_{2i} \qquad \text{and} \qquad
d_{2i+2,1}=\al_{2i+1} \qquad \text{for} \qquad i=0,1,\cdots
\]
we denote it by  $\mathcal{I}^-_{\al}$.
\end{itemize}
Moreover, any nontrivial Riordan involution can be constructed by
this way.
\end{cor}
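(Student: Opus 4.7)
The plan is to build $\mathcal{I}^\pm_\al$ level by level, and then pass to the inverse limit $\Ri=\varprojlim\Ri_n$ of \cite{finitas}. At each level, Theorem \ref{T:invo} provides exactly one free parameter, which I match with the prescribed coefficient of $\al$.

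The base case $n=0$ is immediate: the involutions in $\Ri_0=\K^*$ are $\pm 1$, which fix the two starting values of $d_{0,0}$ in (i) and (ii). The step from $\Ri_0$ to $\Ri_1$ is not covered by Theorem \ref{T:invo}, so I handle it by hand: an involution in $\Ri_1$ must satisfy $d_{1,1}^2=1$ and $d_{1,0}(d_{0,0}+d_{1,1})=0$, and for $d_{1,0}$ to be available as the free parameter $\al_0$ one is forced to take $d_{1,1}=-d_{0,0}$. This isolates the two ``generic'' one-parameter branches from the two ``scalar'' involutions $\pm I$, and is the reason why the word \emph{nontrivial} appears in the statement.

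For $n\geq 2$ I proceed by induction. Assume $D_{n-1}\in\Ri_{n-1}$ has already been constructed as an involution whose entries $d_{2i+1,0}$ and $d_{2i+2,1}$ coincide with the relevant $\al_j$'s. To extend it to $D_n\in\Ri_n$ I apply Theorem \ref{T:invo}: if $n$ is odd I set $d_{n,0}=\al_{n-1}$ and let \eqref{E:d(n,1) n impar} determine $d_{n,1}$, while if $n$ is even I set $d_{n,1}=\al_{n-1}$ and let \eqref{E:d(n,0) n par} determine $d_{n,0}$. Formula \eqref{e:a-seq} then yields the new $A$-sequence coefficient $a_{n-1}$, and the horizontal $A$-sequence recursion, which uses only $a_0,\ldots,a_{n-2}$ together with row $n-1$, fills in the remaining entries $d_{n,k}$ for $k\geq 2$. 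Since $P_{n-1}(D_n)=D_{n-1}$ by construction, the sequence $(D_n)_n$ is compatible and defines an element $\mathcal{I}^\pm_\al\in\Ri$, which is an involution because each $D_n$ is.

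Uniqueness is clear level by level, since at each step every entry is either prescribed or forced by one of the formulas above. Conversely, given any nontrivial Riordan involution $D=(d_{i,j})$, one reads off $d_{0,0}=\pm 1$, sets $\al_{2i}=d_{2i+1,0}$ and $\al_{2i+1}=d_{2i+2,1}$, and observes that $D$ and $\mathcal{I}^\pm_\al$ share all prescribed entries at every level; the level-wise uniqueness then forces $D=\mathcal{I}^\pm_\al$. The one really delicate point in the whole argument is the passage from $\Ri_0$ to $\Ri_1$, where one has to identify, outside the reach of Theorem \ref{T:invo}, which choice of $d_{1,1}$ is compatible with having $d_{1,0}$ as a genuinely free parameter.
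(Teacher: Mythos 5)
Your proof is correct and follows exactly the route the paper intends: the paper states this corollary without written proof, as an immediate consequence of Theorem \ref{T:invo}, and your level-by-level construction (one free parameter per level --- $d_{n,0}$ for $n$ odd, $d_{n,1}$ for $n$ even --- matched to the coefficients of $\al$, the remaining entries forced by the $A$-sequence recursion, followed by passage to the inverse limit) is precisely that argument made explicit. Your separate treatment of the step from $\Ri_0$ to $\Ri_1$, which Theorem \ref{T:invo} does not cover and which is where the word ``nontrivial'' earns its keep by separating the scalar involutions $\pm I$ from the two one-parameter branches with $d_{1,1}=-d_{0,0}$, is a worthwhile addition.
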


The next two propositions were also obtained in \cite{formula}.
$T(f\mid g)$ notation seems to be specially adequate to obtain them.

\begin{prop}
If $T(f\mid g)$ is a Riordan involution then $g_2=0$, where
$g=\sum_{n\geq0}g_nx^n$.
\end{prop}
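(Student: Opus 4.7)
The plan is to translate the involution condition into a functional equation for the single series $g$ and then extract information from its low-order Taylor coefficients.

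First, using the multiplication rule for the $T(f\mid g)$ notation recalled in Section 2, one computes
\[
T(f\mid g)^2 \;=\; T\!\left(f\cdot f\!\left(\frac{x}{g}\right)\,\Big|\, g\cdot g\!\left(\frac{x}{g}\right)\right).
\]
The identity element of $\Ri(\K)$ is $T(1\mid 1)$, and the pair $(f,g)$ associated with a Riordan matrix is uniquely determined (columns $0$ and $1$ fix the ratios $f/g$ and $f/g^2$, hence both series). Therefore the equality $T(f\mid g)^2=T(1\mid 1)$ is equivalent to the two scalar functional equations
\[
f(x)\,f\!\left(\frac{x}{g(x)}\right)=1, \qquad g(x)\,g\!\left(\frac{x}{g(x)}\right)=1,
\]
of which only the second will be needed.

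The next step is a short Taylor-series calculation. I would write $g(x)=g_0+g_1 x+g_2 x^2+O(x^3)$, expand $g(x/g(x))$ up to $O(x^3)$, and match the coefficients of $x^0, x^1, x^2$ in $g(x)\,g(x/g(x))=1$. The constant term yields $g_0^2=1$, so $g_0=\pm 1$ (in agreement with the $\pm 1$ dichotomy from Corollary \ref{C:alfabeta}). The coefficient of $x$ yields $g_1(1+g_0)=0$, which forces a case split by the sign of $g_0$.

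If $g_0=1$ then $g_1=0$ (using $\operatorname{char}\K=0$), so $x/g(x)$ has no quadratic term and the coefficient of $x^2$ in the product collapses to $2g_2$. If $g_0=-1$ then $g_1$ is unconstrained, but the $g_1^2$-contributions cancel in the expansion and the coefficient of $x^2$ in $g(x)\,g(x/g(x))$ simplifies to $-2g_2$. In either case this coefficient must vanish, forcing $g_2=0$. The main (and only) obstacle is the careful bookkeeping in the composition $g(x/g(x))$ to second order, which is routine once the two cases are handled separately.
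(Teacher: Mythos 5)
Your proof is correct. Note that the paper itself offers no proof of this proposition: it is quoted from \cite{formula}, where it falls out of the machinery behind the Riordan Involution's Formula (Theorem \ref{T:invo} here), i.e.\ the recursive description of the entries and of the $A$-sequence together with the fact that $g$ coincides with the $A$-sequence of an involution. Your route is genuinely different and more self-contained: you use only the product rule and the injectivity of $(f,g)\mapsto T(f\mid g)$ (correctly justified via columns $0$ and $1$) to reduce the involution condition to the single functional equation $g(x)\,g(x/g(x))=1$, and then read off $g_0^2=1$, $g_1(1+g_0)=0$ and the vanishing of the quadratic coefficient. What your approach buys is a short, elementary verification independent of the inverse-limit formalism; what the paper's source buys is that the same computation is embedded in a formula producing \emph{all} involutions, so the constraint $g_2=0$ comes packaged with the full parametrization of Corollary \ref{C:alfabeta}. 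One small simplification: the case split on the sign of $g_0$ is unnecessary, since the coefficient of $x^2$ in $g(x)\,g(x/g(x))$ equals $g_2\left(g_0+\tfrac{1}{g_0}\right)=\tfrac{2g_2}{g_0}$ for either sign, the $g_1^2$ contributions cancelling identically; your two-case bookkeeping reaches the same conclusion, so this is cosmetic.
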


\begin{prop}\label{P:grupo g2=0}
If $\Omega_0=\{T(f\mid g)\in\Ri, \ \mid \ g_2=0\}$, then $\Omega_0$
is a subgroup of $\Ri$.
\end{prop}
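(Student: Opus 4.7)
The plan is to verify the three subgroup axioms for $\Omega_0$ directly from the product and inverse formulas in $T(f\mid g)$ notation recalled in Section 2. The neutral element $T(1\mid 1)$ has $g = 1$, so trivially $T(1\mid 1) \in \Omega_0$; the substantive work is closure under products and inverses.

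For closure under products, apply $T(f\mid g)T(r\mid s) = T(fr(x/g)\mid gs(x/g))$, so that the new $g$-series is $\tilde g(x) := g(x)s(x/g(x))$. I would first expand $x/g(x) = (1/g_0)x - (g_1/g_0^2)x^2 + O(x^3)$ using $g_2 = 0$, then substitute into $s$ using $s_2 = 0$ to get $s(x/g(x)) = s_0 + (s_1/g_0)x - (s_1g_1/g_0^2)x^2 + O(x^3)$, and finally multiply by $g$; the coefficient of $x^2$ in $\tilde g$ then collapses to $g_0(-s_1g_1/g_0^2) + g_1(s_1/g_0) = 0$, so $\tilde g_2 = 0$. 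For closure under inverses, $T^{-1}(f\mid g) = T(\cdot\mid A)$ with $A$ determined by $(x/g)\circ(x/A) = x$, i.e.\ $A = x/h^{-1}$ with $h := x/g$. I would compute the first three coefficients of $h^{-1}$ from $h(h^{-1}(x)) = x$ (using $g_2 = 0$), substitute into $A = x/h^{-1}$, and check that $A_2 = 0$.

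The main obstacle is nothing conceptual, only the careful bookkeeping of two nested power-series operations through order two; each check reduces to a single cancellation hinged on $g_2 = 0$. A more conceptual alternative, which avoids the messier inversion step entirely, is to translate into $(d,h)$ notation via $h = x/g$: a direct check gives $h_1 h_3 - h_2^2 = -g_2/g_0^3$, so $g_2 = 0$ is equivalent to the quadratic condition $h_1 h_3 = h_2^2$ on $h$. One then exploits the algebraic identity $c_1 c_3 - c_2^2 = k_1^2(h_1 h_3 - h_2^2) + h_1^4(k_1 k_3 - k_2^2)$ for $c = k\circ h$, which shows closure of the condition under composition; closure under compositional inversion is then automatic by setting $k = h^{-1}$ (so that $c = x$, forcing the right-hand side to vanish).
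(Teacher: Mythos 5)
Your proposal is correct, and both of your routes check out: the order-$x^2$ cancellation $g_0(-s_1g_1/g_0^2)+g_1(s_1/g_0)=0$ for the product is right, and the identity $c_1c_3-c_2^2=k_1^2(h_1h_3-h_2^2)+h_1^4(k_1k_3-k_2^2)$ for $c=k\circ h$ is a straightforward verification from $c_1=k_1h_1$, $c_2=k_1h_2+k_2h_1^2$, $c_3=k_1h_3+2k_2h_1h_2+k_3h_1^3$. Note, however, that the paper itself gives no proof of this proposition: it is quoted from the reference \cite{formula}, so there is no in-text argument to compare against. What can be compared is your second, conceptual route with Remark \ref{R:a2=0}: your computation $h_1h_3-h_2^2=-g_2/g_0^3$ for $h=x/g$ is exactly the translation $g_2=0\Leftrightarrow h_2^2=h_1h_3$ that the paper records there (equation (\ref{E:a2=0 con hs})), and packaging the subgroup property as ``the quadratic form $h_1h_3-h_2^2$ transforms as a cocycle under composition'' is the cleanest version of the argument: it handles products and inverses simultaneously (inverses via $c=x$) and avoids the compositional-inversion bookkeeping of your first route, whose inverse step you only sketch. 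Either version is a complete and valid proof; I would present the conceptual one and relegate the translation $h=x/g$ to a one-line computation.
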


\begin{rmk}\label{R:a2=0}
The group $\Omega_0$ above can be described as the set of Riordan
matrices whose $A$-sequences have null quadratic coefficient,
because if $T(f\mid g)$ is an involution and $A$ its $A$-sequence
then $g=A$. On the other hand, the condition $g_2=0$ turns into
\begin{equation} \label{E:a2=0 con hs}
h_2^2=h_1h_3
\end{equation}
in the $(d,h)$ notation, for $h=\sum_{n\geq1}h_nx^n$.

It is no difficult to prove that $\Omega_0$ is not normal in the
Riordan group.
\end{rmk}

\section{The commutator subgroup and some relations with involutions.}


Let $ \mathfrak{I} $ be the set of all Riordan involutions and
denote by $<\mathfrak{I}>$ the group generated by $ \mathfrak{I} $.
To describe $<\mathfrak{I}>$ some observations are relevant. First,
note that every Riordan matrix $(d,h)\in\Ri$ can be written as the
product of a Riordan matrix with $1$'s in the main diagonal and a
diagonal Riordan matrix as follows
\[
(d,h)\in \Ri,
 \qquad (d,h)=\left(\frac{d}{d_0},\frac{h}{h_1}\right)(d_0,h_1x).
\]
Second, every product of Riordan involutions has in its main
diagonal 1's or -1's or alternatively 1 and -1 starting at 1 or at
-1. So, in the case that  $(d,h)\in <\mathfrak{I}> $ we get that
$(d_0,h_1x)\in \mathcal{K}$ where $\mathcal{K}=\{I, -I,
\mathcal{I}^{+}_0, \mathcal{I}^{-}_0 \}$. Moreover, $\mathcal{K}$ is
a subgroup of the Riordan group (the Klein four-group), $
\mathcal{K} \approx \Z_2\times\Z_2$. Third, from Proposition 20 and
Proposition 21 in \cite{formula} we obtain that
$<\mathfrak{I}>\leq\Omega_0$. Finally, the matrix
$\left(\frac{d}{d_0},\frac{h}{h_1}\right)$ has 1's in its main
diagonal and if $(d,h)\in <\mathfrak{I}> $ then
$\left(\frac{d}{d_0},\frac{h}{h_1}\right)\in\Omega_0$.

As a summary,
\begin{itemize}
\item[(i)] $\left(\frac{d}{d_0},\frac{h}{h_1}\right)$ has 1's in its main
diagonal.
\item[(ii)] If $(d,h)\in <\mathfrak{I}>$ then
$\left(\frac{d}{d_0},\frac{h}{h_1}\right)\in\Omega_0$ and
$(d_0,h_1x)\in \mathcal{K}$.
\item[(iii)] $<\mathfrak{I}>\unlhd \ \Omega_0$.
\end{itemize}

In view of the above observations, Riordan matrices with 1's in the
main diagonal play an important role. In fact,

\begin{thm}
The commutator subgroup of $\Ri$, denoted by $[\Ri,\Ri]$, is formed
by all Riordan matrices with 1's in the main diagonal. That is,
\[ [\Ri,\Ri]=\{(d,h)\in \Ri, \ / \ d_0=1, \ h_1=1\}.\]
Moreover, every element in  $[\Ri,\Ri]$ is a commutator.
\end{thm}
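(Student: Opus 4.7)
The plan is to prove the two inclusions separately. For $[\Ri,\Ri]\subseteq\{(d,h):d_0=1,h_1=1\}=:N$, I would exhibit $N$ as the kernel of an abelianization-type map. Define $\varphi:\Ri\to\K^*\times\K^*$ by $\varphi(d,h)=(d_0,h_1)$. Using the product formula $(d,h)(l,m)=(dl(h),m(h))$, one checks that $\varphi$ is a homomorphism: the constant term of $dl(h)$ is $d_0l_0$ because $l(h(0))=l(0)=l_0$, and the linear coefficient of $m(h)$ is $m_1h_1$ because $m(0)=0$. Since the target is abelian, $[\Ri,\Ri]\subseteq\ker\varphi=N$.

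The core of the argument is the reverse inclusion, which I would prove in the stronger form: every element of $N$ is a \emph{single} commutator. The trick is to conjugate by the dilation $A=(1,2x)$. Given $(d,h)\in N$, look for $B=(f,g)\in\Ri$ with $[A,B]=(d,h)$. Since $ABA^{-1}=(f(2x),g(2x)/2)$, a short calculation with the inverse formula yields
\[
[A,B]=\left(\frac{f(2x)}{f\bigl(g^{-1}(g(2x)/2)\bigr)},\;g^{-1}(g(2x)/2)\right),
\]
so the problem reduces to solving the two functional equations
\[
g(2x)=2\,g(h(x)),\qquad f(2x)=d(x)\,f(h(x)),
\]
with normalizations $g_1=1$ and $f_0=1$.

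I would then solve both equations by equating coefficients of $x^n$. In the first equation, the $g_n$ contributions on the two sides are $2^ng_n$ and $2g_n$, so $(2^n-2)g_n$ is determined by $g_1,\dots,g_{n-1}$ and by $h_2,\dots,h_n$; this is uniquely solvable for $n\geq 2$. In the second equation, analogously, $(2^n-1)f_n$ is determined by $f_0,\dots,f_{n-1}$ and the given data, uniquely solvable for $n\geq 1$.

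The only obstacle in this plan is the nonvanishing of the denominators $2^n-2$ and $2^n-1$, which is exactly where the assumption that $\K$ has characteristic zero enters (any $s\in\K^*$ with $s^n\neq 1$ for $n\geq 1$ could play the role of $2$). Combining the two inclusions yields $[\Ri,\Ri]=N$ and, as a bonus, that every element of $[\Ri,\Ri]$ is a single commutator.
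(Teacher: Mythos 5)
Your proposal is correct and follows essentially the same route as the paper: the easy inclusion via the diagonal (the paper observes directly that commutators of triangular matrices have $1$'s on the diagonal, which is your homomorphism $\varphi$ in disguise), and the reverse inclusion by realizing each element of $N$ as a single commutator $[A_r,B]$ with $A_r=(1,rx)$, reducing to the same two functional equations $g(rx)=r\,g(h)$ and $f(rx)=d\,f(h)$ solved coefficientwise with denominators $r^{n-1}-1$ and $r^n-1$. The paper keeps $r$ generic (any $r$ with $r^n\neq 1$ for all $n\geq 1$) where you fix $r=2$, but this is exactly the variant you already note.
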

\begin{proof}
Consider the set
\[\mathrm{C}=\{(d,h)\in \Ri, \ / \ d_0=1, \ h_1=1\}.\]
Obviously, $\mathrm{C}$ is a subgroup of $\Ri$. If $D\in[\Ri,\Ri]\
\Rightarrow \exists \ C_1, C_2, \cdots, C_k$ commutators such that
$D=C_1C_2\cdots C_k.$ But all $C_i$'s are commutators and triangular
matrices then $C_i\in\mathrm{C}$, so $D\in \mathrm{C}$. Consequently
$[\Ri,\Ri]\subseteq \mathrm{C}$.

If $D\in\mathrm{C}$ then $D=(d,h)$ with $d_0=1$ and $h_1=1$. Let
$r\in\K$ such that $r\neq0, r^n\neq1, \forall n\geq1$. Consider the
diagonal Riordan matrix $A_r=(1,rx)$. We are going to prove that
there exists at least a Riordan matrix $B_r=(l,m)$ depending on $r$,
and of course on $D$, such that $D=[A_r,B_r]$. That is,
\[
(d,h)=(1,rx)(l,m)\left(1,\frac{x}{r}\right)\left(\frac{1}{l(m^{-1})},m^{-1}\right).\]
Using the product in the Riordan group we get
\[
(d,h)=\left(\frac{l(rx)}{l\left(m^{-1}\left(\frac{m(rx)}{r}\right)\right)},m^{-1}\left(\frac{m(rx)}{r}\right)\right)
\]
hence
\[
h=m^{-1}\left(\frac{m(rx)}{r}\right) \quad \text{and} \quad
d=\frac{l(rx)}{l\left(m^{-1}\left(\frac{m(rx)}{r}\right)\right)}
\]
\[
h=m^{-1}\left(\frac{m(rx)}{r}\right)\ \Leftrightarrow \
m(h)=\frac{m(rx)}{r} \ \Leftrightarrow \
(1,h)m=\left(\frac{1}{r},rx\right)m
\]
Consequently for $n\geq2$
\[
m_n=\frac{1}{r^{n-1}-1}\sum_{k=1}^{n-1}[x^n]h^km_k
\]
with $m_1\neq0$ arbitrary and $[x^n]h^k$ represents the coefficient
of $x^n$ of the $k$-th of the formal power series $h$. Analogously,
\[
d=\frac{l(rx)}{l\left(m^{-1}\left(\frac{m(rx)}{r}\right)\right)} \
\Leftrightarrow \ d=\frac{l(rx)}{l(h)} \ \Leftrightarrow \
\]
\[
\Leftrightarrow \ dl(h)=l(rx) \ \Leftrightarrow \ (d,h)l=(1,rx)l
\]
then $l_0\neq0$ and
\[
l_n=\frac{1}{r^n-1}\sum_{k=0}^{n-1}d_{n,k}l_k, \quad \text{for}\
n\geq1
\]
Hence $D$ is a commutator and of course $D\in[\Ri, \Ri]$.
\end{proof}

\begin{rmk}
Note that we have actually proved something stronger than the
statement of the theorem because we can use the same matrix $A_r$
for any commutator.
\end{rmk}

\begin{ejem} In view of the above result, Pascal's triangle
$\left(\frac{1}{1-x},\frac{x}{1-x}\right)$ is a commutator in the
Riordan group. For example for $r\neq0,1$ we get
\[
\left(\frac{1}{1-x},\frac{x}{1-x}\right)=(1,rx)
\left(\frac{r-1}{r-1-x},\frac{rx-x}{r-1-x}\right)\left(1,\frac{x}{r}\right)
\left(\frac{r-1}{r-1+x},\frac{rx-x}{r-1+x}\right).
\]

\end{ejem}

Since the abelianization of a group $G$ is the quotient group
$G/[G,G]$ it is clear that
\begin{cor}
The abelianization of the Riordan group is isomorphic to the
subgroup formed by all diagonal Riordan matrices. Consequently, it
is isomorphic to the direct product $\K^{*}\times\K^{*}$.
\end{cor}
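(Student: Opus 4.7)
The plan is to realize the abelianization explicitly by constructing a surjective homomorphism from $\Ri$ onto the subgroup $\Delta$ of diagonal Riordan matrices, whose kernel is exactly $[\Ri,\Ri]$, and then to identify $\Delta$ with $\K^{*}\times\K^{*}$ as an abstract group.

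First I would make precise the subgroup $\Delta=\{(d_0,h_1x)\in\Ri : d_0,h_1\in\K^{*}\}$ of diagonal Riordan matrices, and define the map $\varphi:\Ri\to\Delta$ by $\varphi(d,h)=(d_0,h_1x)$, that is, the map that retains only the constant term of $d$ and the linear term of $h$. Using the Riordan product formula $(d,h)(l,m)=(dl(h),m(h))$, the constant term of $dl(h)$ is $d_0l_0$ and the coefficient of $x$ in $m(h)=\sum m_k h^k$ is $m_1 h_1$, so $\varphi$ is a group homomorphism onto $\Delta$. The decomposition $(d,h)=\bigl(d/d_0,h/h_1\bigr)(d_0,h_1x)$ recalled in Section~3 shows that $\varphi$ is surjective and, together with the previous theorem, that $\ker\varphi=[\Ri,\Ri]$.

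Next I would apply the First Isomorphism Theorem: since $\varphi$ is a surjective homomorphism with kernel $[\Ri,\Ri]$, we obtain $\Ri/[\Ri,\Ri]\cong\Delta$. Because $\Delta$ is the image of an abelianization, it is automatically abelian, but one can also verify this directly: in $\Delta$ the product simplifies to $(a,rx)(b,sx)=(ab,rsx)=(ba,srx)=(b,sx)(a,rx)$. This is the first assertion of the corollary.

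Finally, to prove the second assertion I would exhibit the isomorphism $\Delta\to\K^{*}\times\K^{*}$ given by $(d_0,h_1x)\mapsto(d_0,h_1)$. The product computation above shows that this map is a homomorphism, and it is clearly a bijection, so $\Delta\cong\K^{*}\times\K^{*}$. Composing with the previous isomorphism gives $\Ri/[\Ri,\Ri]\cong\K^{*}\times\K^{*}$. There is no real obstacle here since all the non-trivial work has already been done in the preceding theorem; the only point requiring a small check is that $\varphi$ is a homomorphism, and that is immediate from the formulas for the product in $\Ri$.
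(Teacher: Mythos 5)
Your proof is correct and follows the route the paper itself intends: the paper states this corollary without proof, treating it as an immediate consequence of the theorem identifying $[\Ri,\Ri]$ with the matrices having $1$'s on the diagonal, and your explicit homomorphism $(d,h)\mapsto(d_0,h_1x)$ together with the First Isomorphism Theorem is exactly the standard argument being left implicit. The verification that $\varphi$ is a homomorphism and that $\Delta\cong\K^{*}\times\K^{*}$ is accurate.
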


Note also that, in this case:
\[
\Ri\approx[\Ri,\Ri]\rtimes\Ri/[\Ri,\Ri]
\]

\begin{rmk}
For $\Ri_n$ (Riordan groups of finite matrices) analogous results
hold. $[\Ri_n,\Ri_n]$ is formed by all $(n+1)\times(n+1)$ Riordan
matrices with 1's in the main diagonal. Moreover, for $n\geq1$ the
abelianization of $\Ri_n$ is also isomorphic to
$\K^{*}\times\K^{*}$. Note that the group $\Ri_0$ is
$(\K^{\star},\cdot)$ and then it is abelian. In the remaining cases
we obtain
\[
\Ri_n\approx[\Ri_n,\Ri_n]\rtimes\Ri_n/[\Ri_n,\Ri_n]
\]
\end{rmk}

\section{The group generated by Riordan involutions}

To prove Theorem \ref{T:main} we have to use the following partial
fact that describes how certain Riordan matrices can be expressed as
product of three Riordan involutions. We are going to construct the
three involutions doing an exhaustive use of the Riordan
involution's Formula in Theorem \ref{T:invo}. Theorem \ref{T:3invo}
below is the key to obtain the general result.

\begin{thm}\label{T:3invo}
Let $d=\sum_{n\geq0}d_nx^n$ and $h=\sum_{n\geq1}h_nx^n$  be two
power series such that $d_0=1$ and $h_1=-1$. Suppose also that the
Riordan matrix $(d,h)\in\Omega_0$, then there are three Riordan
involutions, $\mathcal{I}^{+}_{\al}=(\de_1,\om_1), \
\mathcal{I}^{+}_{\be}=(\de_2,\om_2), \
\mathcal{I}^{+}_{\g}=(\de_3,\om_3)$ such that
$(d,h)=(\de_1,\om_1)(\de_2,\om_2)(\de_3,\om_3)$
\end{thm}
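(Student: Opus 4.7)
I would argue by induction on the truncation level $n$, using the inverse limit description $\Ri(\K) = \varprojlim_n \Ri_n(\K)$ to recursively build three compatible $\mathcal{I}^+$-involutions whose partial products match $\Pi_n(d, h)$ at every level. The choice of $\mathcal{I}^+$-involutions is dictated by the diagonals: each $\mathcal{I}^+_\alpha$ has $(0, 0)$-entry $1$ and alternating diagonal $(1, -1, 1, -1, \ldots)$, so the product of three such has $d_0 = 1$ and $h_1 = -1$, matching the hypotheses; note that $(d, h) \in \Omega_0$ is automatic on the product side, since $\Omega_0$ is a subgroup containing all involutions. By Theorem \ref{T:invo}, extending an $\mathcal{I}^+$-involution from $\Ri_{n-1}$ to $\Ri_n$ introduces exactly one free scalar per factor --- the new $(n, 0)$-entry when $n$ is odd, the new $(n, 1)$-entry when $n$ is even --- and the rest of the new row is forced by the involution constraint together with the $A$-sequence formula (\ref{e:a-seq}).

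\textbf{Inductive step.} At level $n = 1$ the factorization reduces to the scalar equation $\alpha_0 - \beta_0 + \gamma_0 = d_1$, trivially solvable. For the step $n-1 \to n$, assume $M_1, M_2, M_3 \in \Ri_{n-1}$ with $M_1 M_2 M_3 = \Pi_{n-1}(d, h)$ already built, and let $p_1, p_2, p_3$ be the three new free scalars. Write $M_i = M_i^{(0)} + p_i E_i$ where $E_i$ is supported in row $n$, columns $0, \ldots, n-1$, and has $(E_i)_{n, n} = 0$ (the diagonal being fixed at $(-1)^n$). A direct check shows that every quadratic or cubic cross-term in the $p_i$ vanishes at the $(n, 0)$- and $(n, 1)$-entries: the zero diagonal of $E_i$ cannot be bridged through the lower-triangular factors $M_j^{(0)}$, which have zero column $n$ above row $n$. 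Consequently those two entries are affine in $(p_1, p_2, p_3)$. Since matching the first $n$ rows reduces to matching just the $(n, 0)$- and $(n, 1)$-entries (the remaining entries in row $n$ being determined by the Riordan structure from the already-fixed coefficients plus the new $d_n, h_n$), we obtain a $2 \times 3$ linear system in $(p_1, p_2, p_3)$.

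\textbf{Main obstacle.} The heart of the proof is verifying that this $2 \times 3$ system is always consistent, i.e.\ its coefficient matrix has rank $2$. The derivatives $\partial_{p_i}(M_1 M_2 M_3)_{n, j}$ for $j = 0, 1$ and $i = 1, 2, 3$ can be read off the decomposition above as bilinear expressions in already-determined quantities: entries of $M_2^{(0)} M_3^{(0)}$ for $p_1$, of $M_1^{(0)}$ and $M_3^{(0)}$ for $p_2$, of $M_1^{(0)} M_2^{(0)}$ for $p_3$. Linear independence of these derivatives in at least one $2 \times 2$ minor --- ensured by the alternating-diagonal pattern of the three factors and their distinct positions in the product --- gives solvability (with a one-parameter family of solutions) at every step. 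I expect this rank computation to be the technical heart of the argument, separating the cases of $n$ odd and even since the free scalars play different roles in each parity. Assembling the compatible extensions via the inverse limit then produces three power series $\alpha, \beta, \gamma \in \K[[x]]$ with $(d, h) = \mathcal{I}^+_\alpha \mathcal{I}^+_\beta \mathcal{I}^+_\gamma$ in $\Ri(\K)$, as required.
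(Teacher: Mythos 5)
Your setup is sound and close in spirit to the paper's: induction on the truncation level via the inverse limit, with three new free scalars per row (one per factor, by Theorem \ref{T:invo}) against two new constraints (the $(n,0)$- and $(n,1)$-entries of the product), and the observation that these entries are affine in the new parameters is correct. But the step you defer as ``the technical heart'' --- that the resulting $2\times 3$ coefficient matrix always has rank $2$ --- is not merely unproved, it is false. When $n$ is odd the three new free scalars are the $(n,0)$-entries of the factors, and none of them can reach the $(n,1)$-entry of the product: in $\sum_{j,k}(M_1)_{n,j}(M_2)_{j,k}(M_3)_{k,1}$ every term containing an $(\,\cdot\,)_{n,0}$ is multiplied by $(M_3)_{0,1}=0$ or does not occur in column $1$ at all. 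So one of your two rows is identically zero in $(p_1,p_2,p_3)$, and the $(n,1)$-constraint is instead a condition on the parameters chosen at level $n-1$. This is exactly what the paper confronts: it goes back and treats the level-$(n-1)$ free scalars as unknowns in a $2\times 2$ (resp.\ $2\times3$) system whose determinant is $\left(\frac{n}{2}-1\right)(b_{2,1}-a_{2,1})$ for the $h$-equation and $\frac{n}{2}(b_{2,1}-a_{2,1})$ for the $d$-equation, and this is nonzero only because one commits at level $2$ to choosing $a_{2,1}\neq b_{2,1}$. Your proposal has no mechanism for revisiting earlier choices and no global constraint guaranteeing the minors are nonzero; ``the alternating-diagonal pattern'' does not ensure it.

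A second, decisive symptom: your argument never uses the hypothesis $(d,h)\in\Omega_0$ for solvability --- you only remark it is automatic on the product side. Since $<\mathfrak{I}>\leq\Omega_0$, the theorem is false without that hypothesis, so any inductive scheme that would succeed without invoking it must break somewhere. In the paper it enters precisely at level $n=3$, where the zero-row phenomenon above first appears: the $(3,1)$-constraint reduces to $a_{2,1}^2-2h_2a_{2,1}-h_3=(b_{2,1}-c_{2,1})^2$, which is compatible with the level-$2$ equation $a_{2,1}-b_{2,1}+c_{2,1}=h_2$ exactly when $h_2^2=h_1h_3$, i.e.\ when $(d,h)\in\Omega_0$. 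To repair your plan you would need (i) to separate the odd and even parities and show that at the ``bad'' parity the consistency condition is a solvable linear system in the previous level's parameters, (ii) to locate where $\Omega_0$ is consumed, and (iii) to fix once and for all a choice such as $a_{2,1}\neq b_{2,1}$ making all later determinants nonzero --- which is essentially the paper's proof (the paper also simplifies the bookkeeping by first solving $h=\om_3(\om_2(\om_1))$ with the involutions $(1,\om_i)$ and then solving for the $\de_i$ with $\de_3\equiv1$).
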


\begin{proof}
To prove this theorem is equivalent to show that the system of
functional equations
\begin{equation}\label{E:ladproductode3}
d(x)=\de_1(x)\de_2(\om_1(x))\de_3(\om_2(\om_1(x)))
\end{equation}
\begin{equation}
\label{E:lahproductode3}
    h(x)=\om_3(\om_2(\om_1(x)))
\end{equation}
has solutions with $\mathcal{I}^{+}_{\al}=(\de_1,\om_1), \
\mathcal{I}^{+}_{\be}=(\de_2,\om_2)$ and $\
\mathcal{I}^{+}_{\g}=(\de_3,\om_3)$ Riordan involutions. We begin by
equation (\ref{E:lahproductode3}). Notice that the matrices
$(1,\om_1), \ (1,\om_2), \ (1,\om_3)$ are also involutions. Equation
(\ref{E:lahproductode3}) is equivalent to
\[
(1,\om_1)(1,h)=(1,\om_2)(1,\om_3) \quad \text{and then to} \quad
(1,\om_1)h=(1,\om_2)\om_3
\]

Suppose now
\[
(1,\om_1)=(a_{i,j})_{i,j\in\N}, \quad
(1,\om_2)=(b_{i,j})_{i,j\in\N}, \quad (1,\om_3)=(c_{i,j})_{i,j\in\N}
\]

The equation in $\Ri_2$ is
\[
\left(
  \begin{array}{ccc}
    1 & 0 & 0  \\
    0 & -1 & 0  \\
    0 & a_{2,1} & 1  \\
    \end{array}
\right)\left(
         \begin{array}{c}
           0 \\
           -1 \\
           h_2\\
          \end{array}
       \right)=
       \left(
  \begin{array}{ccc}
    1 & 0 & 0 \\
    0 & -1 & 0  \\
    0 & b_{2,1} & 1  \\
  \end{array}
\right)\left(
         \begin{array}{c}
           0 \\
           -1 \\
           c_{2,1}\\
         \end{array}
       \right)
\]
The above equality is equivalent to the linear equation
\begin{equation}\label{E:n=2}
 a_{2,1}-b_{2,1}+c_{2,1}=h_2
\end{equation}
that has infinite solutions because $a_{2,1},b_{2,1}$ and $c_{2,1}$
can be choosen arbitrarily by (a) in Theorem \ref{T:invo}.

The equation in $\Ri_3$ is
\[
\left(
  \begin{array}{cccc}
    1 & 0 & 0 & 0 \\
    0 & -1 & 0 & 0 \\
    0 & a_{2,1} & 1 & 0 \\
    0 & -a_{2,1}^2 & -2a_{2,1} & -1 \\
  \end{array}
\right)\left(
         \begin{array}{c}
           0 \\
           -1 \\
           h_2\\
           h_3 \\
         \end{array}
       \right)=
       \left(
  \begin{array}{cccc}
    1 & 0 & 0 & 0 \\
    0 & -1 & 0 & 0 \\
    0 & b_{2,1} & 1 & 0 \\
    0 & -b_{2,1}^2 & -2b_{2,1} & -1 \\
  \end{array}
\right)\left(
         \begin{array}{c}
           0 \\
           -1 \\
           c_{2,1}\\
           -c_{2,1}^2 \\
         \end{array}
       \right)
\]
The above equality is equivalent to the system
\[
\left\{
  \begin{array}{ll}
    a_{2,1}-h_2=b_{2,1}-c_{2,1} \\
    a_{2,1}^2-2h_2a_{2,1}-h_3=(b_{2,1}-c_{2,1})^2.
  \end{array}
\right.
\]
Since $(1,h)\in\Omega_0$,  using (\ref{E:a2=0 con hs}) the system
above reduces to the unique linear equation $(\ref{E:n=2})$ which is
just the same as in $\Ri_2$. Then it has solutions.

Suppose now that the equation in $\Ri_n$ has solution. In
$\Ri_{n+1}$ only the following new equation appears
\begin{equation}\label{E:ecu1}
\sum_{k=1}^{n+1}a_{n+1,k}h_k=\sum_{k=1}^{n+1}b_{n+1,k}c_{k,1}
\end{equation}

If $n$ is odd, (\ref{E:ecu1}) becomes
\[
-a_{n+1,1}+\sum_{k=2}^{n}a_{n+1,k}h_k+h_{n+1}=-b_{n+1,1}+
\sum_{k=2}^nb_{n+1,k}c_{k,1}+c_{n+1,1}
\]
that is
\begin{equation}\label{E:ecu2}
a_{n+1,1}-b_{n+1,1}+c_{n+1,1}=h_{n+1}+\sum_{k=2}^{n}(a_{n+1,k}h_k-b_{n+1,k}c_{k,1})
\end{equation}
Induction hypothesis allows us to set solutions in $\Ri_n$. Once we
take one of them, all in the right side of equation $(\ref{E:ecu2})$
is known by the construction  of  Riordan matrices by rows.
Moreover, by (a) in Theorem \ref{T:invo}, $a_{n+1,1}, \ b_{n+1,1}, \
c_{n+1,1}$ can be taken arbitrarily to construct the involutions.
So, $(\ref{E:ecu2})$ has infinite solutions. The system obtained
adding $(\ref{E:ecu2})$ to the previous one used to compute
solutions in $\Ri_n$ is consistent. Then we have solutions for
$n+1$.

If $n$ is even, the coefficients $a_{n+1,1}, \ b_{n+1,1}, \
c_{n+1,1}$ can not be taken arbitrarily to construct the
involutions. From (\ref{E:d(n,1) n impar}) in Theorem \ref{T:invo}
they depend, in particular, on $a_{n,1}, b_{n,1}$ and $c_{n,1}$. So,
to be sure of the existence of solutions in $\Ri_{n+1}$, assuming
that they exist in $\Ri_n$, we have to study the consistency of the
system
\[
\left\{
  \begin{array}{ll}
    a_{n,1}-b_{n,1}+c_{n,1}=h_{n}+\sum_{k=2}^{n-1}(a_{n,k}h_k-b_{n,k}c_{k,1}) \\
   \sum_{k=1}^{n+1}a_{n+1,k}h_k=\sum_{k=1}^{n+1}b_{n+1,k}c_{k,1}
  \end{array}
\right.
\]
where the unknown variables are $a_{n,1}, b_{n,1}$ and $c_{n,1}$.
Using (\ref{E:d(n,1) n impar}) in Theorem \ref{T:invo} we get
\[
a_{n+1,1}=\frac{1}{2}\sum_{k=2}^na_{n+1,k}a_{k,1}=
\frac{1}{2}(a_{n+1,2}a_{2,1}+a_{n+1,n}a_{n,1})+K_1
\]
\[
b_{n+1,1}=\frac{1}{2}\sum_{k=2}^nb_{n+1,k}b_{k,1}=
\frac{1}{2}(b_{n+1,2}b_{2,1}+b_{n+1,n}b_{n,1})+K_2
\]
\[
c_{n+1,1}=\frac{1}{2}\sum_{k=2}^nc_{n+1,k}c_{k,1}=
\frac{1}{2}(c_{n+1,2}c_{2,1}+c_{n+1,n}c_{n,1})+K_3
\]
where
\[
K_1=\frac{1}{2}\sum_{k=3}^{n-1}a_{n+1,k}a_{k,1}, \quad
K_2=\frac{1}{2}\sum_{k=3}^{n-1}b_{n+1,k}b_{k,1}, \quad
K_3=\frac{1}{2}\sum_{k=3}^{n-1}c_{n+1,k}c_{k,1}.
\]

Note that $K_1,\  K_2, \ K_3$ can be computed once one set a
solution in $\Ri_{n-1}$ by induction hypothesis and by the
construction by rows of Riordan matrices.

Since $n$ is even we get
\[
a_{n+1,2}=\sum_{k=0}^{n-1}A^{\om_1}_ka_{n,1+k}=
-a_{n,1}+\sum_{k=1}^{n-2}A^{\om_1}_ka_{n,1+k}+A_{n-1}^{\om_1}
\]
where  $A^{\om_i}_k$ is the $k$-th coefficient of the $A$-sequence,
$A^{\om_i}$, of the involution $(1,\om_i)$ for $i=1,2,3$. From
(\ref{e:a-seq}) in Theorem \ref{T:invo} and recalling that
$a_{n,0}=0$ for $n\geq1$, we obtain that
\[
A_{n-1}^{\om_1}=\frac{1}{a_{n-1,n-1}}\left(a_{n,1}-\sum_{k=0}^{n-2}A^{\om_1}_ka_{n-1,k}\right)=
-a_{n,1}+\sum_{k=1}^{n-2}A^{\om_1}_ka_{n-1,k}
\]
hence
\[
a_{n+1,2}=-2a_{n,1}+K_4,
\]
where
\[
K_4=\sum_{k=1}^{n-2}A^{\om_1}_k(a_{n,1+k}+a_{n-1,k}). \quad
\]
In a similar way we get
\[
 b_{n+1,2}=-2b_{n,1}+K_5, \quad
c_{n+1,2}=-2c_{n,1}+K_6,
\]
where
\[
K_5=\sum_{k=1}^{n-2}A^{\om_2}_k(b_{n,1+k}+b_{n-1,k}), \quad
K_6=\sum_{k=1}^{n-2}A^{\om_2}_k(c_{n,1+k}+c_{n-1,k}). \quad
\]
Now, we are going to write the equation
\[
\sum_{k=1}^{n+1}a_{n+1,k}h_k=\sum_{k=1}^{n+1}b_{n+1,k}c_{k,1}
\]
as
\[
-a_{n+1,1}+a_{n+1,2}h_2+\sum_{k=3}^{n+1}a_{n+1,k}h_k=-b_{n+1,1}+
b_{n+1,2}c_{2,1}+b_{n+1,n}c_{n,1}-c_{n+1,1}+\sum_{k=3}^{n-1}b_{n+1,k}c_{k,1}
\]
it can be written as
\[
a_{n+1,1}-b_{n+1,1}+
b_{n+1,2}c_{2,1}+b_{n+1,n}c_{n,1}-c_{n+1,1}-a_{n+1,2}h_2=K_7
\]
putting on the left side what depends on unknown variables and on
the right side, $K_7$, what depends on the induction hypothesis and
on data, note that
\[
K_7=\sum_{k=3}^{n+1}a_{n+1,k}h_k-\sum_{k=3}^{n-1}b_{n+1,k}c_{k,1}.
\]

Moreover, as $(1,\om_i)$ $i=1,2,3$ are involutions and we are
considering $n$ even, we get
\[
a_{n+1,n}=-na_{2,1}, \qquad b_{n+1,n}=-nb_{2,1}, \qquad
c_{n+1,n}=-nc_{2,1}.
\]
Then, gathering together every equalities above we obtain
\[
\frac{1}{2}(a_{n+1,2}a_{2,1}+a_{n+1,n}a_{n,1})
-\frac{1}{2}(b_{n+1,2}b_{2,1}+b_{n+1,n}b_{n,1})+
b_{n+1,2}c_{2,1}+b_{n+1,n}c_{n,1}-
\]
\[
-\frac{1}{2}(c_{n+1,2}c_{2,1}+c_{n+1,n}c_{n,1})-a_{n+1,2}h_2=K_8
\]
where $K_8=K_7-K_1+K_2+K_3$, o equivalently
\[
\left(\frac{1}{2}a_{2,1}-h_2\right)a_{n+1,2}+\frac{1}{2}a_{n+1,n}a_{n,1}+
\left(c_{2,1}-\frac{1}{2}b_{2,1}\right)b_{n+1,2}-\frac{1}{2}b_{n+1,n}b_{n,1}+
\]
\[
+\left(b_{n+1,n}-\frac{1}{2}c_{n+1,n}\right)c_{n,1}-
\frac{1}{2}c_{n+1,2}c_{2,1}=K_8
\]
and
\[
\left(\frac{1}{2}a_{2,1}-h_2\right)(-2a_{n,1}+K_4)+\frac{1}{2}a_{n+1,n}a_{n,1}+
\left(c_{2,1}-\frac{1}{2}b_{2,1}\right)(-2b_{n,1}+K_5)-\frac{1}{2}b_{n+1,n}b_{n,1}
+\]
\[\left(b_{n+1,n}-\frac{1}{2}c_{n+1,n}\right)c_{n,1}-
\frac{1}{2}(-2c_{n,1}+K_6)c_{2,1}=K_8
\]
so
\[
(2h_2-a_{2,1}+\frac{1}{2}a_{n+1,n})a_{n,1}+(b_{2,1}-2c_{2,1}-\frac{1}{2}b_{n+1,n})b_{n,1}+
\]
\[
+(c_{2,1}+b_{n+1,n}-\frac{1}{2}c_{n+1,n})c_{n,1}=K_9
\]
where
\[
K_9=K_8-\left(\frac{1}{2}a_{2,1}-h_2\right)K_4-\left(c_{2,1}-\frac{1}{2}b_{2,1}\right)K_5+
\frac{1}{2}c_{2,1}K_6.
\]


Finally
\[
\left(2h_2-\left(\frac{n}{2}+1\right)a_{2,1}\right)a_{n,1}+\left(\left(\frac{n}{2}+1\right)b_{2,1}-2c_{2,1}\right)b_{n,1}
+\left(\left(\frac{n}{2}+1\right)c_{2,1}-nb_{2,1}\right)c_{n,1}=K_9.
\]

It is important to note that for $m=1\cdots9$, $K_m$ depends only on
the solution for $\Ri_{n-1}$ and on $h$.

In summary, we have to study the consistency of the linear system
\[
\left\{
  \begin{array}{ll}
    a_{n,1}-b_{n,1}+c_{n,1}=h_{n}+\sum_{k=2}^{n-1}(a_{n,k}h_k-b_{n,k}c_{k,1}) \\
\left(2h_2-\left(\frac{n}{2}+1\right)a_{2,1}\right)a_{n,1}+\left(\left(\frac{n}{2}+1\right)b_{2,1}-2c_{2,1}\right)b_{n,1}
+\left(\left(\frac{n}{2}+1\right)c_{2,1}-nb_{2,1}\right)c_{n,1}=K_9.
  \end{array}
\right.
\]
Consider the matrix
\[
\begin{pmatrix}
1& -1\\
2h_2-\left(\frac{n}{2}+1\right)a_{2,1} &
\left(\frac{n}{2}+1\right)b_{2,1}-2c_{2,1}
\end{pmatrix}
\]
whose determinant is $\left(\frac{n}{2}-1\right)(b_{2,1}-a_{2,1})$
by $(\ref{E:n=2})$. Since we can choose $a_{2,1}\neq b_{2,1}$ in
$(\ref{E:n=2})$ and $n\geq4$ the system is consistent and we get
solutions for $n+1$.

\

To complete the proof of the theorem we are going to substitute in
(\ref{E:ladproductode3}) the solutions found before for
(\ref{E:lahproductode3}). The involutions $(1,\om_i)$ and $(\de_i,
\om_i)$ share the $A$-sequence denoted before by $A^{\om_i}$. In
particular, we have to recall that $A_1^{\om_1}=-a_{2,1}$,
$A_1^{\om_2}=-b_{2,1}$ and $a_{2,1}\neq b_{2,1}$ where
$(1,\om_1)=(a_{i,j})$ and $(1,\om_2)=(b_{i,j})$. Additionally, we
will prove also that we can always suppose that $\de_3\equiv1$ in
(\ref{E:ladproductode3}). Doing this (\ref{E:ladproductode3}) turns
into any of the following three equivalent equations
\[
\de_1(x)\de_2(\om_1(x))=d(x)
\]
\[
(\de_1,\om_1)(\de_2,\om_2)(1)=d(x)
\]
or
\begin{equation}\label{E:ladreducida}
(\de_1,\om_1)d(x)=(\de_2,\om_2)(1)
\end{equation}
Suppose that
\[
(\de_1,\om_1)=(u_{i,j})_{i,j\in\N}, \quad
(\de_2,\om_2)=(v_{i,j})_{i,j\in\N}, \quad
d(x)=\sum_{k\geq0}d_{k,0}x^k.
\]
So, in $\Ri_0$ equation (\ref{E:ladreducida}) holds tautologically.
In $\Ri_1$ the equation (\ref{E:ladreducida}) is
\[
\left(
  \begin{array}{cc}
    1 & 0 \\
    u_{1,0} & -1 \\
  \end{array}
\right)\left(
         \begin{array}{c}
           1 \\
           d_{1,0} \\
         \end{array}
       \right)=\left(
  \begin{array}{cc}
    1 & 0 \\
    v_{1,0} & -1 \\
  \end{array}
\right)\left(
         \begin{array}{c}
           1 \\
           0 \\
         \end{array}
       \right)
\]
and the linear equation
\begin{equation}\label{E:lau10v10}
u_{1,0}-v_{1,0}=d_{1,0}
\end{equation}
has infinite solutions for any $d_{1,0}$ for the unknowns $u_{1,0}$
and $v_{1,0}$ and then we have solutions in $\Ri_1$. In $\Ri_2$  the
equation (\ref{E:ladreducida}) is
\[
\left(
  \begin{array}{ccc}
    1 & 0 & 0\\
    u_{1,0} & -1 &0 \\
    -\frac{1}{2}u_{1,0}u_{2,1} & u_{2,1} & 1
  \end{array}
\right)\left(
         \begin{array}{c}
           1 \\
           d_{1,0} \\
           d_{2,0}
         \end{array}
       \right)=\left(
  \begin{array}{ccc}
    1 & 0 & 0\\
    v_{1,0} & -1 & 0\\
     -\frac{1}{2}v_{1,0}v_{2,1} & v_{2,1} & 1
  \end{array}
\right)\left(
         \begin{array}{c}
           1 \\
           0 \\
           0
         \end{array}
       \right)
\]
then we have to solve the system whose first equation is
$(\ref{E:lau10v10})$ and the second one is
\[
-\frac{1}{2}u_{1,0}u_{2,1}+u_{2,1}d_{1,0}+d_{2,0}=-\frac{1}{2}v_{1,0}v_{2,1}
\]
or equivalently
\[
\left(d_{1,0}-\frac{1}{2}u_{1,0}\right)u_{2,1}+d_{2,0}=-\frac{1}{2}v_{1,0}v_{2,1}.
\]
Using $(\ref{E:lau10v10})$ we get
\[
\left(v_{1,0}-\frac{1}{2}u_{1,0}\right)u_{2,1}-\frac{1}{2}v_{1,0}v_{2,1}=d_{2,0}.
\]
By the construction of Riordan matrices by means of the A-sequence
we obtain
\[
u_{2,1}=a_{2,1}-u_{1,0},\qquad v_{2,1}=b_{2,1}-v_{1,0},
\]
consequently
\[
\left(v_{1,0}-\frac{1}{2}u_{1,0}\right)(a_{2,1}-u_{1,0})-\frac{1}{2}v_{1,0}(b_{2,1}-v_{1,0})=d_{2,0}
\]
or
\[
\frac{1}{2}(u_{1,0}-v_{1,0})^2+\left(v_{1,0}-\frac{1}{2}u_{1,0}\right)a_{2,1}-\frac{1}{2}v_{1,0}b_{2,1}=d_{2,0}.
\]
Doing some computations we obtain
\[
-a_{2,1}u_{1,0}+(2a_{2,1}-b_{2,1})v_{1,0}=2d_{2,0}-d_{1,0}^2
\]
So, the linear system to solve is
\[
\left\{
  \begin{array}{ll}
    u_{1,0}-v_{1,0}=d_{1,0}, \\
   -a_{2,1}u_{1,0}+(2a_{2,1}-b_{2,1})v_{1,0}=2d_{2,0}-d_{1,0}^2 \hbox{.}
  \end{array}
\right.
\]
Using (\ref{E:n=2}) and (\ref{E:lau10v10}) we prove that the above
system has solutions. Consequently we solve our problem for $\Ri_2$.
%

We proceed by induction in a similar way to the previous case.
Suppose we have solved (\ref{E:ladproductode3}) in $\Ri_n$ and we
want to solve it in $\Ri_{n+1}$. Then we have the linear system with
the $n$ previous equations and the new equation
\begin{equation}{\label{E:newEqu}}
\sum_{j=0}^{n+1}u_{n+1,j}d_{j,0}=v_{n+1,0}
\end{equation}
which is the same as
\[
 u_{n+1,0}-v_{n+1,0}=d_{n+1,0}-\sum_{j=1}^{n}u_{n+1,j}d_{j,0}
\]
or
\[
u_{n+1,0}-v_{n+1,0}=L_{1}
\]
with $L_{1}=d_{n+1,0}-\sum_{j=1}^{n}u_{n+1,j}d_{j,0}$. As in the
case of the previous symbols $K_m$, $L_j$ groups together terms
depending on induction hypothesis and data.

Since the elements $(2k+1,0)$ in an involution are arbitrary and the
elements $(2k,0)$ are given by (\ref{E:d(n,0) n par}) we have to
distinguish two cases. In the case $n+1$ odd, $u_{n+1,0}$ and
$v_{n+1,0}$ are arbitrary. Then, the linear system with the new
equation has solutions. In the case that $n+1$ even, these elements
are given by (\ref{E:d(n,0) n par}). Then we must study the
consistency of the linear system
\[
\left\{
  \begin{array}{ll}
    u_{n,0}-v_{n,0}=d_{n,0}-\sum_{j=1}^{n-1}u_{n,j}d_{j,0},  \\
    \sum_{j=0}^{n+1}u_{n+1,j}d_{j,0}=v_{n+1,0} \hbox{.}
  \end{array}
\right.
\]
By using formula (\ref{E:d(n,0) n par}) we get
\[
u_{n+1,0}=-\frac{1}{2}(u_{n+1,1}u_{1,0}+u_{n+1,n}u_{n,0})+ L_2
\]
\[
v_{n+1,0}=-\frac{1}{2}(v_{n+1,1}v_{1,0}+v_{n+1,n}v_{n,0})+ L_3
\]
where
\[
L_2=-\frac{1}{2}\sum_{k=2}^{n-1}u_{n+1,k}u_{k,0}\quad
\text{and}\quad L_3=-\frac{1}{2}\sum_{k=2}^{n-1}v_{n+1,k}v_{k,0}.
\]
Moreover, by means of the horizontal construction of a Riordan
matrix we have
\[
u_{n+1,1}=-u_{n,0}+L_4, \qquad v_{n+1,1}=-v_{n,0}+L_5,
\]
where
\[
L_4=\sum_{k=1}^nA_k^{\om_1}u_{n,k}, \qquad
L_5=\sum_{k=1}^nA_k^{\om_2}v_{n,k}
\]
Beside, we know that
\[
u_{n+1,n}=-u_{1,0}+na_{2,1}\qquad \text{and}\qquad
v_{n+1,n}=-v_{1,0}+nb_{2,1}.
\]
By $(\ref{E:lau10v10})$ we can write equation $(\ref{E:newEqu})$ as
\[
u_{n+1,0}+u_{n+1,1}(u_{1,0}-v_{1,0})-v_{n+1,0}=L_6,
\]
where $L_6=-\sum_{j=2}^{n+1}u_{n+1,j}d_{j,0}$. Now we replace the
expressions above to obtain
\[
-\frac{1}{2}(u_{n+1,1}u_{1,0}+u_{n+1,n}u_{n,0})+
u_{n+1,1}(u_{1,0}-v_{1,0})-\frac{1}{2}(v_{n+1,1}v_{1,0}+v_{n+1,n}v_{n,0}))=L_6-L_2+L_3
\]
\[
(\frac{1}{2}u_{1,0}-v_{1,0})u_{n+1,1}-\frac{1}{2}u_{n+1,n}u_{n,0}-\frac{1}{2}(v_{n+1,1}v_{1,0}+v_{n+1,n}v_{n,0}))=L_6-L_2+L_3
\]
\[
(\frac{1}{2}u_{1,0}-v_{1,0})(-u_{n,0}+L_4)-\frac{1}{2}(-u_{1,0}+na_{2,1})u_{n,0}-
\frac{1}{2}((-v_{n,0}+L_5)v_{1,0}+(-v_{1,0}+nb_{2,1})v_{n,0}))=L_6-L_2+L_3
\]
\[
(\frac{1}{2}u_{1,0}-v_{1,0})(-u_{n,0})-\frac{1}{2}(-u_{1,0}+na_{2,1})u_{n,0}-
\frac{1}{2}((-v_{n,0})v_{1,0}+(-v_{1,0}+nb_{2,1})v_{n,0}))=L_7
\]
where
\[
L_7=L_6-L_2+L_3-(\frac{1}{2}u_{1,0}-v_{1,0})L_4+\frac{1}{2}v_{1,0}L_5
\]
reorganizing the variables we get the linear system
\[
\left\{
  \begin{array}{ll}
    u_{n,0}-v_{n,0}=d_{n,0}-\sum_{j=1}^{n-1}u_{n,j}d_{j,0},  \\
    \left(v_{1,0}-\frac{n}{2}a_{2,1}\right)u_{n,0}+
\left(\frac{n}{2}b_{2,1}-v_{1,0}\right)v_{n,0}=L_7 \hbox{.}
  \end{array}
\right.
\]
It has solutions because
\[
\begin{vmatrix}
1 & -1 \\
v_{1,0}-\frac{n}{2}a_{2,1} & \frac{n}{2}b_{2,1}-v_{1,0}
\end{vmatrix}=\frac{n}{2}b_{2,1}-v_{1,0}+v_{1,0}-\frac{n}{2}a_{2,1}=\frac{n}{2}(b_{2,1}-a_{2,1})\neq0
\]
that is the needed condition for the equation
(\ref{E:lahproductode3}) holds. Then the case $n+1$ has also
solutions.

So, we have proved that our result is true in $\Ri_n$ for every
$n\in\N$. Since the group $\mathcal{R}$ is the inverse limit of the
inverse sequence of groups
$\{(\mathcal{R}_n)_{n\in\N},(P_n)_{n\in\N}\}$ and $P_n$ transforms
solutions in $\Ri_{n+1}$ to solutions in $\Ri_n$, the proof is
finished.
\end{proof}

Note that, if $D\in<\mathfrak{I}>$, the main diagonal of $D$ is the
same as the main diagonal of one of the diagonal Riordan
involutions. In each of the cases, we can multiply $D$ by one of the
diagonal involutions to get a Riordan matrix $(d,h)$ with $d_0=1$
and $h_1=-1$, then by Theorem \ref{T:3invo} we obtain that $D$ is a
product of at most four involutions. Then we have proved Theorem
\ref{T:main} for $\Ri$ and $\Ri_n$, $n\geq1$.

In fact we can describe, up to isomorphism, the group generated by
involutions using the commutator subgroup and the semidirect product
concept in the following way.

\begin{thm}\label{C:prodsemi}
\[<\mathfrak{I}> \thickapprox [\Ri, \Ri]_0 \rtimes \mathcal{K}\]
where $[\Ri, \Ri]_0=\Omega_0\cap[\Ri, \Ri]$ and $\mathcal{K}=\{I,
-I, \mathcal{I}^{+}_0, \mathcal{I}^{-}_0 \}$.
\end{thm}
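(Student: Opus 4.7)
The plan is to establish the four standard criteria for an internal semidirect product of $<\mathfrak{I}>$ with kernel $N := [\Ri,\Ri]_0$ and complement $H := \mathcal{K}$: namely, $N, H \leq <\mathfrak{I}>$, $N \unlhd <\mathfrak{I}>$, $N \cap H = \{I\}$, and $<\mathfrak{I}> = NH$. Once these four are in place, the isomorphism $<\mathfrak{I}> \cong [\Ri,\Ri]_0 \rtimes \mathcal{K}$ is forced.

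The containment $\mathcal{K} \subseteq <\mathfrak{I}>$ is immediate because $-I$, $\mathcal{I}^+_0$ and $\mathcal{I}^-_0$ are involutions and $I$ is an empty product. The containment $[\Ri,\Ri]_0 \subseteq <\mathfrak{I}>$ is the only real work, and it is precisely here that Theorem \ref{T:3invo} does the heavy lifting. Given $(d,h) \in [\Ri,\Ri]_0$, so that $d_0 = h_1 = 1$ and $(d,h) \in \Omega_0$, I would right-multiply by $\mathcal{I}^+_0 \in \mathcal{K}$; the product lies in $\Omega_0$ (by Proposition \ref{P:grupo g2=0}) and has $d_0 = 1$, $h_1 = -1$, so Theorem \ref{T:3invo} expresses it as $J_1 J_2 J_3$ with each $J_i$ an involution. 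Then $(d,h) = J_1 J_2 J_3 \, \mathcal{I}^+_0 \in <\mathfrak{I}>$ as a product of four involutions.

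The three remaining conditions are then essentially bookkeeping from the observations of Section~3. Normality of $N$ in $<\mathfrak{I}>$ follows from a two-step stability argument: conjugation by any $g \in <\mathfrak{I}>$ preserves $[\Ri,\Ri]$ (normal in the full Riordan group) and also preserves $\Omega_0$ (because $<\mathfrak{I}> \subseteq \Omega_0$ and $\Omega_0$ is a subgroup, so we are conjugating by an element of $\Omega_0$), hence it preserves their intersection. The triviality $N \cap H = \{I\}$ is a diagonal inspection: every element of $[\Ri,\Ri]_0$ has main diagonal $(1,1,1,\ldots)$, whereas among the four elements of $\mathcal{K}$ only $I$ shares this diagonal. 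Finally, $<\mathfrak{I}> = NH$ is read straight off the decomposition $(d,h) = (d/d_0, h/h_1)(d_0, h_1 x)$: by observation (ii) at the start of Section~3, when $(d,h) \in <\mathfrak{I}>$ the first factor lies in $\Omega_0 \cap [\Ri,\Ri] = N$ and the second in $H$.

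The genuine obstacle is the inclusion $[\Ri,\Ri]_0 \subseteq <\mathfrak{I}>$, but that is exactly the content already extracted from Theorem \ref{T:3invo}; every other verification is a short one-line observation drawn from Section~3.
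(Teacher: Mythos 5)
Your proposal is correct and follows essentially the same route as the paper: the key step in both is to take $D\in[\Ri,\Ri]_0$, form $D\mathcal{I}^+_0$ (which has $d_0=1$, $h_1=-1$ and lies in $\Omega_0$), apply Theorem \ref{T:3invo} to write it as three involutions, and conclude $<\mathfrak{I}>=[\Ri,\Ri]_0\mathcal{K}$, with the remaining semidirect-product criteria checked as routine observations. Your normality argument (conjugation preserves $[\Ri,\Ri]$ by normality in $\Ri$ and preserves $\Omega_0$ since $<\mathfrak{I}>\subseteq\Omega_0$) is a slight variant of the paper's (which identifies $[\Ri,\Ri]_0=[\Ri,\Ri]\cap<\mathfrak{I}>$), but both are valid and the difference is cosmetic.
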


\begin{proof}
Note that $<\mathfrak{I}> \subseteq [\Ri, \Ri]_0\mathcal{K}$ where
$[\Ri, \Ri]_0\mathcal{K}$ represents the set of Riordan matrices
obtained by multiplying an element in $[\Ri, \Ri]_0$ with an element
in $\mathcal{K}$ in such order. Now suppose $DK\in[\Ri,
\Ri]_0\mathcal{K}$ with $D\in[\Ri, \Ri]_0$ and $K\in\mathcal{K}$.
Then
$D\mathcal{I}^{+}_0=\mathcal{I}^{+}_{\al}\mathcal{I}^{+}_{\be}\mathcal{I}^{+}_{\g}$
by Theorem \ref{T:3invo}. Consequently
$D=\mathcal{I}^{+}_{\al}\mathcal{I}^{+}_{\be}\mathcal{I}^{+}_{\g}\mathcal{I}^{+}_0$,
hence $DK\in<\mathfrak{I}>$. So, $<\mathfrak{I}> = [\Ri,
\Ri]_0\mathcal{K}$. We also proved in Theorem \ref{T:3invo} that, in
fact, $[\Ri, \Ri]_0= [\Ri, \Ri]\cap<\mathfrak{I}>$. Therefore $[\Ri,
\Ri]_0\unlhd <\mathfrak{I}>$. Finally, $[\Ri,
\Ri]_0\cap\mathcal{K}=\{I\}$. This implies the result by using
\cite{amstrong} page 133.
\end{proof}

{\bf Final remark:} It is easy to prove that any element in the
group generated by involutions in $\Ri_1$ can be described as the
product of two of them. Using Theorem \ref{T:invo} herein and
Corollary 7 in \cite{formula}, the matrix
\[
\left(
  \begin{array}{ccc}
    1 & 0 & 0 \\
    0 & 1 & 0 \\
    1 & 0 & 1 \\
  \end{array}
\right)=\left(
  \begin{array}{ccc}
    1 & 0 & 0 \\
    1 & -1 & 0 \\
    -1 & 2 & 1 \\
  \end{array}
\right)\left(
  \begin{array}{ccc}
    1 & 0 & 0 \\
    1 & -1 & 0 \\
    0 & 0 & 1 \\
  \end{array}
\right)\left(
  \begin{array}{ccc}
    1 & 0 & 0 \\
    0 & -1 & 0 \\
    0 & -2 & 1 \\
  \end{array}
\right)\left(
  \begin{array}{ccc}
    1 & 0 & 0 \\
    0 & -1 & 0 \\
    0 & 0 & 1 \\
  \end{array}
\right)
\]
points out that, for $n\geq2$, in $\Ri_n$ and in $\Ri$ there are
elements in the group generated by involutions that can not be
described as the product of three or less involutions.

{\bf Acknowledgment:} The first and second authors were partially
supported by grant MINECO, MTM2015-63612-P.

\end{document}